\newtheorem{teo}{Teorema}
\newtheorem{cor}[teo]{Corollary}
\newtheorem{lema}[teo]{Lemma}
\newcommand{\CC}{\mathbb{C}}
\newcommand{\RR}{\mathbb{R}}
\newcommand{\ZZ}{\mathbb{Z}}
\newcommand{\Hc}{\mathcal{H}}
\newcommand{\Tc}{\mathcal{T}}
\newcommand{\Ac}{\mathcal{A}}
\newcommand{\Cc}{\mathcal{C}}
\newcommand{\Lc}{\mathcal{L}}
\DeclareMathOperator*{\esup}{ess\,sup}
\DeclareMathOperator*{\einf}{ess\,inf}
\title{{\bf Semi-direct product of groups, filter banks and sampling}}
\author{
{\bf A.~G. Garc\'{\i}a}\thanks{E-mail:\texttt{agarcia@math.uc3m.es}}, \,\,
{\bf M.~A. Hern\'andez-Medina}\thanks{E-mail:\texttt{miguelangel.hernandez.medina@upm.es}}
{\bf\,\, and \,\, G. P\'erez-Villal\'on}\thanks{E-mail:\texttt{gperez@euitt.upm.es}}
}
\date{}
\begin{document}
\maketitle
\begin{itemize}
\item[*] Departamento de Matem\'aticas, Universidad Carlos III de Madrid,
 Avda. de la Universidad 30, 28911 Legan\'es-Madrid, Spain.
\item[\dag] Information Processing and Telecommunications Center, Universidad Polit\'ecnica de Madrid, Departamento de Matem\'atica Aplicada a las Tecnolog\'{\i}as de la Informaci\'on y las Comunicaciones, E.T.S.I.T., Avda. Complutense 30, 28040 Madrid, Spain.
 \item[\ddag] Departamento de Matem\'atica Aplicada a las Tecnolog\'{\i}as de la Informaci\'on y las Comunicaciones, E.T.S.I.T., Universidad Polit\'ecnica de Madrid,
 Avda. Complutense 30, 28040 Madrid, Spain.
\end{itemize}
\begin{abstract}
An abstract sampling theory associated to a unitary representation of a countable discrete non abelian group $G$, which is a semi-direct product of groups, on a separable Hilbert space is studied. A suitable expression of the data samples and the use of a filter bank formalism allows to fix the mathematical problem to be solved: the search of appropriate dual frames for $\ell^2(G)$. An example involving  crystallographic groups illustrates the obtained results by using average or pointwise samples. 
\end{abstract}
{\bf Keywords}: Semi-direct product of groups; unitary representation of a group; LCA groups; dual frames; sampling expansions.

\noindent{\bf AMS}: 42C15; 94A20; 22B05; 20H15.
\section{Statement of the problem}
\label{section1}
In this paper an abstract sampling theory associated to non abelian groups is derived for the specific case of a unitary representation of a semi-direct product of groups on a separable Hilbert space. Semi-direct product of groups provide important examples of non abelian groups such as  dihedral groups, infinite dihedral group, euclidean motion groups or crystallographic groups. Concretely, let $(n,h)\mapsto U(n,h)$ be a unitary representation on a separable Hilbert space $\Hc$ of a  semi-direct product $G=N\rtimes_\phi H$, where $N$ is a countable discrete LCA (locally compact abelian) group, $H$ is a finite group, and $\phi$ denotes the action of the group $H$ on the group $N$ (see Section \ref{section2} infra for the details); for a fixed $a\in \Hc$ we consider the $U$-invariant subspace in $\Hc$
\[
\Ac_a=\Big\{ \sum_{(n,h)\in G} \alpha(n,h)\,U(n,h)a\,\, :\,\, \{\alpha(n,h)\}_{(n,h)\in G}\in \ell^2(G) \Big\}\,,
\]
where we assume that $\{U(n,h)a\}$ is a Riesz sequence for $\Hc$, i.e., a Riesz basis for $\Ac_a$ (see Ref.~\cite{barbieri:15} for a necessary and sufficient condition). Given $K$ elements $b_k$ in $\Hc$, which do not belong necessarily to 
$\Ac_a$, the main goal in this paper is the stable recovery of any $x\in \Ac$ from the given data (generalized samples)
\[
\Lc_kx(n):=\big\langle x, U(n,1_H)b_k \big\rangle_\Hc\,, \quad \text{ $n\in N$ and $k=1,2,\dots, K$}\,,
\]
where $1_H$ denotes the identity element in $H$. These samples are nothing but a generalization of average sampling in shift-invariant subspaces of $L^2(\RR^d)$; see, among others, Refs.~\cite{aldroubi:05,hector:14,garcia:06,garcia:09,kang:11,michaeli:11,pohl:12,sun:03}.
The case where $G$ is a discrete LCA group and the samples are taken at a uniform lattice of $G$ has been solved in Ref.~\cite{garcia:17}; this work relies on the use of the Fourier analysis in the LCA group $G$. In the case involved here  a Fourier analysis is not available and, consequently, we need to overcome this drawback. 

Having in mind  the  filter bank formalism in discrete LCA  groups (see, for instance, Refs.~\cite{bol:98,vetterli:98,garcia:18}), the given data 
$\{\Lc_kx(n)\}_{n\in N;\,k=1,2,\dots K}$ can be expressed as the output of a suitable $K$-channel analysis filter bank corresponding to the input $\boldsymbol{\alpha}=\{\alpha(n,h)\}_{(n,h)\in G}$ in $\ell^2(G)$. As a consequence, the problem consists of finding a synthesis part of  the former filter bank allowing perfect reconstruction; besides only Fourier analysis on the LCA group $N$ is needed. Then, roughly speaking, substituting the output of the synthesis part in $x=\sum_{(n,h)\in G} \alpha(n,h)\,U(n,h)a$ we will obtain the corresponding sampling formula in $\Ac_a$. 

\medskip

This said, as it could be expected the problem can be mathematically formulated as the search of dual frames for $\ell^2(G)$ having the form 
\[
\big\{T_n \mathsf{h}_k\big\}_{n\in N;\,k=1,2,\dots K}\quad \text{and}\quad \big\{T_n\mathsf{g}_k\big\}_{n\in N;\,k=1,2,\dots K}\,. 
\]
Here $\mathsf{h}_k, \mathsf{g}_k \in \ell^2(G)$, $T_n \mathsf{h}_k(m,h)=\mathsf{h}_k(m-n,h)$ and $T_n \mathsf{g}_k(m,h)=\mathsf{g}_k(m-n,h)$, $(m,h)\in G$, where $n\in N$ and $k=1,2,\dots,K$. Besides, for any $x\in \Ac_a$ we have the expression for its samples 
\[
\Lc_k x(n)=\big\langle \boldsymbol{\alpha}, T_n \mathsf{h}_k \big\rangle_{\ell^2(G)}\,,\quad  \text{$n\in N$ and $k=1,2,\dots,K$}\,.
\] 
Needless to say that frame theory plays a central role in what follows; the necessary background on Riesz bases or frame theory in a separable Hilbert space can be found, for instance, in Ref.~\cite{ole:16}. Finally, sampling formulas in $\Ac_a$ having the form
\[
x=\sum_{k=1}^K\sum_{n\in N} \Lc_k x(n)\,U(n,1_H)c_k \quad \text{ in $\Hc$}\,,
\]
for some $c_k\in \Ac_a$, $k=1,2,\dots,K$, will come out by using, for $\mathsf{g}\in \ell^2(G)$ and $n\in N$, the shifting property $\Tc_{U,a}\big(T_n \mathsf{g}\big)=U(n, 1_H) \big(\Tc_{U,a}\mathsf{g}\big)$ that satisfies the natural isomorphism $\Tc_{U,a}\,:\,\ell^2(G) \rightarrow \Ac_a$ which maps the usual orthonormal basis 
$\{\boldsymbol{\delta}_{(n,h)}\}_{(n,h)\in G}$ for $\ell^2(G)$ onto the Riesz basis $\big\{U(n,h)a\big\}_{(n,h)\in G}$ for $\Ac_a$. All these steps will be carried out throughout the remaining sections. For the sake of completeness, Section \ref{section2} includes some basic preliminaries on semi-direct product of groups and Fourier analysis on LCA groups.
The paper ends with an illustrative example involving the quasi regular representation  of a crystallographic group on 
$L^2(\RR^d)$; sampling formulas involving average or pointwise samples are obtained for the corresponding $U$-invariant subspaces in $L^2(\RR^d)$.
\section{Some mathematical preliminaries}
\label{section2}
In this section we introduce the basic tools in semi-direct product of groups and in harmonic analysis in a discrete LCA group that they  will be used in the sequel.
\subsection{Preliminaries on semi-direct product of groups}
Given groups $(N, \cdot)$ and $(H,\cdot)$, and a homomorphism $\phi: H \rightarrow Aut(N)$ their semi-direct product 
$G:=N\rtimes_\phi H$ is defined as follows: The underlying set of $G$ is the set of pairs $(n,h)$ with $n\in N$ and $h\in H$, along with the multiplication rule 
\[
(n_1,h_1)\cdot(n_2,h_2):=(n_1\phi_{h_1}(n_2),h_1h_2)\,, \quad (n_1,h_1), \,(n_2,h_2) \in G\,, 
\]
where we denote $\phi(h):=\phi_h$; usually the homomorphism $\phi$ is referred as the action of the group $H$ on the group $N$. Thus we obtain a new group with identity element $(1_N,1_H)$, and inverse 
$(n,h)^{-1}=(\phi_{h^{-1}}(n^{-1}),h^{-1})$. 

Besides, we have the isomorphisms $N\simeq N\times \{1_H\}$ and $H\simeq \{1_N\}\times H$.  Unless $\phi_h$ equals the identity for all $h\in H$, the group $G=N \rtimes_\phi H$ is not abelian, even for abelian $N$ and $H$ groups. In case $N$ is an abelian group, it is a normal subgroup in $G$. Next we list some examples of semi-direct product of groups:
\begin{enumerate}
   \item The dihedral group $D_{2N}$ is the group of symmetries of a regular $N$-sided  polygon; it  is the semi-direct product $D_{2N}=\ZZ_N\rtimes_\phi \ZZ_2$ where 
   $\phi_{\bar{0}}\equiv Id_{\ZZ_N}$ and $\phi_{\bar{1}}(\bar{n})=-\bar{n}$ for each $\bar{n}\in \ZZ_N$. The infinite dihedral group $D_\infty$ defined as $\ZZ\rtimes_\phi\ZZ_2$ for the similar homomorphism $\phi$ is the group of isometries of 
   $\ZZ$.
   \item The Euclidean motion group $E(d)$  is the semi-direct product $\RR^d\rtimes_\phi O(d)$, where $O(d)$ is the orthogonal group of order $d$ and $\phi_A(x)=Ax$ for $A\in O(d)$ and $x\in \RR^d$. It contains as a subgroup any crystallographic group $M\ZZ^d \rtimes_\phi \Gamma$, where $M\ZZ^d$ denotes a full rank lattice of $\RR^d$ and 
   $\Gamma$ is any finite subgroup of $O(d)$ such that $\phi_\gamma(M\ZZ^d)=M\ZZ^d$ for each $\gamma \in \Gamma$.
   \item The orthogonal group $O(d)$ of all orthogonal real $d\times d$ matrices is isomorphic to the semi-direct product $SO(d) \rtimes_\phi C_2$, where $SO(d)$ consists of all orthogonal matrices with determinant $1$ and $C_2=\{I,R\}$ a cyclic group of order $2$; $\phi$ is the homomorphism given by $\phi_I(A)=A$ and  $\phi_R(A)=RAR^{-1}$ for $A\in SO(d)$.
\end{enumerate}
Suppose that $N$ is an LCA group with Haar measure $\mu_N$ and $H$ is a locally compact group with Haar measure $\mu_H$. Then, the semi-direct product 
$G=N\rtimes_\phi H$ endowed with the product topology becomes also a topological group. For the left Haar measure on $G$ see Ref.~\cite{barbieri:15}.
\subsection{Some preliminaries on harmonic analysis on discrete LCA groups}
The results about harmonic analysis on  locally compact abelian (LCA) groups are borrowed from Ref.~\cite{folland:95}. Notice that, in particular, a countable discrete abelian group is a second countable Hausdorff LCA group. 

\noindent For a countable discrete group $(N,\cdot)$, non necessarily abelian, the {\em convolution} of $x,y :N\rightarrow \CC$ is formally defined as $(x\ast y)(m):= \sum_{n\in N} x(n) y(n^{-1}m)$,\, $m\in N$. If in addition the group is abelian, therefore denoted by $(N,+)$, the convolution reads as
\[
(x\ast y)(m):= \sum_{n\in N} x(n)
y(m-n)\,, \quad m\in N\,.
\]
Let $\mathbb{T}=\{z\in \mathbb{C}: |z|=1\}$ be the unidimensional torus. 
We said that $\xi:N\mapsto \mathbb{T}$ is a character of $N$ if $\xi(n+m)=\xi(n)\xi(m)$  for all $n,m\in N$. We denote $\xi(n)=\langle n,\xi \rangle$.  Defining $(\xi+\gamma)(n)=\xi(n)\gamma(n)$,  the set of characters $\widehat{N}$ with the operation 
$+$ is a group, called the dual group of $N$; since $N$ is discrete 
$\widehat{N}$ is compact \cite[Prop. 4.4]{folland:95}. For $x\in \ell^1(N)$ we define its {\em Fourier transform} as
\[
X(\xi)=\widehat{x}(\xi):=\sum_{n\in N}x(n) \overline{\langle n,\xi \rangle}=\sum_{n\in N}x(n) \langle -n,\xi \rangle\,,\quad \xi\in \widehat{N}\,.
\]
It is known \cite[Theorem 4.5]{folland:95} that $\widehat{\mathbb{Z}}\cong \mathbb{T}$, with $\langle n,z \rangle = z^n$, and
$\widehat{\mathbb{Z}}_s\cong \mathbb{Z}_s:=\mathbb{Z}/s\mathbb{Z}$, with $\langle n,m \rangle = W_s^{nm}$, where $W_s=e^{2\pi i/s}$.

There exists a unique  measure, the Haar measure $\mu$ on  $\widehat{N}$ satisfying  $\mu(\xi+ E)=\mu(E)$,  for every Borel set $E\subset \widehat{N}$ \cite[Section 2.2]{folland:95},
and $\mu(\widehat{N})=1$. We denote 
$\int_{\widehat{N}} X(\xi) d\xi=\int_{\widehat{N}} X(\xi) d\mu(\xi)$.
If $N=\mathbb{Z}$, 
\[
\int_{\widehat{N}} X(\xi) d\xi=\int_{\mathbb{T}} X(z) dz= \frac{1}{2\pi}\int_0^{2\pi} X(e^{iw}) dw\,,
\]
and if $N=\mathbb{Z}_s$, 
\[
\int_{\widehat{N}} X(\xi) d\xi=\int_{\mathbb{Z}_s} X(n) dn= \frac{1}{s}\sum_{n\in \mathbb{Z}_s} X(n)\,.
\]
If $N_1, N_2,\ldots N_d$ are abelian discrete groups then the dual group of the product group is 
$\big(N_1 \times N_2 \times \ldots \times N_d\big)^{\wedge}\cong \widehat{N}_1 \times \widehat{N}_2 \times \ldots \times \widehat{N}_d$ (see \cite[Prop. 4.6]{folland:95}) with 
\[
\big\langle\, (n_1,n_2,\ldots,n_d)\, ,\, (\xi_1,\xi_2\ldots,\xi_d)\, \big\rangle = \langle n_1,\xi_1\rangle \langle n_2,\xi_2\rangle\cdots \langle n_d,\xi_d\rangle\,.
\]
The Fourier transform on $\ell^1(N)\cap \ell^2(N)$ is an isometry on a dense subspace of $L^2(\widehat{N})$; Plancherel theorem extends it in a unique manner to a unitary operator  of $\ell^2(N)$ onto $L^2(\widehat{N})$ \cite[p. 99]{folland:95}. The following lemma, giving a relationship between Fourier transform and convolution, will be used later:
\begin{lema}
 \label{conv} 
 Assume that $a, b\in \ell^2(N)$ \,and \, $\widehat{a}(\xi)\,\widehat{b}(\xi)\in L^2(\widehat{N})$. Then the convolution $a\ast b$ belongs to  $\ell^2(N)$ and
 $\widehat{a\ast b}(\xi)=\widehat{a}(\xi)\,\widehat{b}(\xi)$, \,a.e. $\xi\in \widehat{N}$.
 \end{lema}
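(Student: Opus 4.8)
The plan is to reduce everything to the Plancherel identity for the unitary pair $\ell^2(N)$, $L^2(\widehat N)$. First I would observe that the convolution $(a\ast b)(m)=\sum_{n\in N}a(n)\,b(m-n)$ is already well defined for every $m\in N$: by Cauchy--Schwarz the series converges absolutely, with $|(a\ast b)(m)|\le \|a\|_{\ell^2(N)}\,\|b\|_{\ell^2(N)}$, so a priori $a\ast b\in\ell^\infty(N)$. The role of the hypothesis $\widehat a\,\widehat b\in L^2(\widehat N)$ is precisely to upgrade this to membership in $\ell^2(N)$ together with the product formula, since in general the convolution of two $\ell^2$ sequences need not be square-summable.

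Next I would introduce $c\in\ell^2(N)$ as the inverse Plancherel image of $\widehat a\,\widehat b\in L^2(\widehat N)$, that is, the unique element of $\ell^2(N)$ with $\widehat c=\widehat a\,\widehat b$ a.e. Since $\widehat N$ is compact with $\mu(\widehat N)=1$, every character $\langle m,\cdot\rangle$ lies in $L^2(\widehat N)\subset L^1(\widehat N)$, and the $L^2$-inversion formula gives, for each $m\in N$,
\[
c(m)=\int_{\widehat N}\widehat a(\xi)\,\widehat b(\xi)\,\langle m,\xi\rangle\,d\xi\,,
\]
the integrand being integrable because $\widehat a\,\widehat b\in L^2(\widehat N)\subset L^1(\widehat N)$. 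The goal is then to check that $c(m)=(a\ast b)(m)$ for every $m\in N$; once this is established the proof is complete, for then $a\ast b=c\in\ell^2(N)$ and $\widehat{a\ast b}=\widehat c=\widehat a\,\widehat b$ a.e.

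For the identification, fix $m\in N$ and consider the sequence $\beta_m(n):=\overline{b(m-n)}$, which lies in $\ell^2(N)$ with $\|\beta_m\|_{\ell^2(N)}=\|b\|_{\ell^2(N)}$. A single change of index in the defining series of the Fourier transform yields $\widehat{\beta_m}(\xi)=\overline{\langle m,\xi\rangle}\;\overline{\widehat b(\xi)}$, equivalently $\overline{\widehat{\beta_m}(\xi)}=\langle m,\xi\rangle\,\widehat b(\xi)$. Hence
\[
c(m)=\int_{\widehat N}\widehat a(\xi)\,\overline{\widehat{\beta_m}(\xi)}\,d\xi=\big\langle\widehat a,\widehat{\beta_m}\big\rangle_{L^2(\widehat N)}=\big\langle a,\beta_m\big\rangle_{\ell^2(N)}=\sum_{n\in N}a(n)\,b(m-n)=(a\ast b)(m)\,,
\]
where the third equality is Plancherel's theorem and the fourth uses $\overline{\beta_m(n)}=b(m-n)$. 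This gives the claim.

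An alternative, more hands-on route is by truncation: write $a=\lim_j a_j$ and $b=\lim_j b_j$ in $\ell^2(N)$ with $a_j,b_j$ finitely supported, note that the lemma is elementary for such $a_j,b_j$ (their convolution is finitely supported), check that $\widehat{a_j}\,\widehat{b_j}\to\widehat a\,\widehat b$ in $L^1(\widehat N)$ by Cauchy--Schwarz, and that $(a_j\ast b_j)(m)\to(a\ast b)(m)$ pointwise by dominated convergence in the summation index; comparing the two resulting limits in the inversion formula gives the same conclusion. In either approach the only genuine subtlety --- the main obstacle --- is that $a\ast b$ is a priori merely a bounded sequence, so one cannot manipulate $\widehat{a\ast b}$ formally: the argument must instead manufacture the candidate $\ell^2$-limit from $\widehat a\,\widehat b$ and then verify the pointwise match, which is exactly where the hypothesis enters in an essential way.
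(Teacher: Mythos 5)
Your proof is correct and follows essentially the same route as the paper: the key computation in both is Plancherel applied to the pairing of $a$ with the conjugate reflected translate of $b$ (your $\beta_m(n)=\overline{b(m-n)}$ is the paper's $\widetilde{b}(\cdot-m)$), combined with the fact that the characters form an orthonormal basis of $L^2(\widehat N)$. You merely run the argument in the opposite direction, defining $c$ as the inverse Plancherel image of $\widehat a\,\widehat b$ and matching it pointwise with $a\ast b$, whereas the paper reads $(a\ast b)(n)$ off as the Fourier coefficients of $\widehat a\,\widehat b$; the content is the same.
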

 \begin{proof} 
 By using Plancherel theorem \cite[Theorem 4.25]{folland:95} we obtain
 \[
 \begin{split}
 (a\ast b)(n)&=\sum_{m\in N}a(m)b(n-m)=\langle a,\widetilde{b}(\cdot-n)\rangle_{\ell^2(N)}=\big\langle \widehat{a},\widehat{\widetilde{b}(\cdot-n)}\big\rangle_{L^2(\widehat{N})}\\
 &=\int_{\widehat{N}}  \widehat{a}(\xi)\,\overline{\widehat{\widetilde{b}}(\xi)} \, \overline{\langle -n,\xi \rangle}d\xi=\int_{\widehat{N}}  \widehat{a}(\xi)\,\widehat{b}(\xi) \, \overline{\langle -n,\xi \rangle}d\xi\,.
 \end{split}
 \]
 Since $\big\{\langle-n,\xi \big\rangle\}_{n\in N}$ is an orthonormal basis for $L^2(\widehat{N})$ \cite[Theorems 4.26 and 4.31]{folland:95} (we are assuming that 
 $\mu_{\widehat{N}}(\widehat{N})=1$) we finally obtain
\[
\widehat{a}(\xi)\,\widehat{b}(\xi)=\sum_{n\in N} (a\ast b)(n)\langle -n,\xi \rangle=\widehat{a\ast b}(\xi)\,,\quad \text{a.e. $\xi\in \widehat{N}$}\,.
\]
\end{proof}
\section{Filter banks formalism on semi-direct product of groups}
\label{section3}
In what follows we will assume that $G=N\rtimes_\phi H$ where $(N,+)$ is a countable discrete abelian group and $(H,\cdot)$ is a finite group. Having in mind the operational calculus
$(n,h)\cdot(m,l)=(n+\phi_h(m), hl)$,\,\, $(n,h)^{-1}=(\phi_{h^{-1}}(-n), h^{-1})$ and $(n,h)^{-1}\cdot(m,l)=(\phi_{h^{-1}}(m-n),h^{-1}l)$, the convolution $\boldsymbol{\alpha}*\mathsf{h}$ of $\boldsymbol{\alpha}, \mathsf{h}\in \ell^2(G)$ can be expressed as
\begin{equation}
\label{convolution}
\begin{split}
(\boldsymbol{\alpha}*\mathsf{h})(m,l)&=\sum_{(n,h)\in G} \alpha(n,h)\, \mathsf{h}\big[(n,h)^{-1}\cdot(m,l)\big]\\ &=\sum_{(n,h)\in G} \alpha(n,h)\, \mathsf{h}\big(\phi_{h^{-1}}(m-n),h^{-1}l\big)\,,\quad (m,l)\in G\,.
\end{split}
\end{equation}
For a function $\boldsymbol{\alpha}:G\rightarrow \CC$, its {\em $H$-decimation}  $\downarrow_H\!\boldsymbol{\alpha} :N\rightarrow \CC$ is defined as $(\downarrow_H\!\boldsymbol{\alpha})(n):=\alpha(n,1_H)$ for any $n\in N$. Thus we have
\begin{equation}
\label{dec}
\begin{split}
\downarrow_H\!(\boldsymbol{\alpha}*\mathsf{h})(m)&=(\boldsymbol{\alpha}*\mathsf{h})(m,1_H)=\sum_{(n,h)\in G} \alpha(n,h)\, \mathsf{h}\big(\phi_{h^{-1}}(m-n),h^{-1}\big)\\
&=\sum_{(n,h)\in G} \alpha(n,h)\, \mathsf{h}[(n-m,h)^{-1}]\,,\quad m\in N\,.
\end{split}
\end{equation}
Defining the polyphase components of $\boldsymbol{\alpha}$ and $\mathsf{h}$ as $\boldsymbol{\alpha}_h(n):=\alpha(n,h)$ and $\mathsf{h}_h(n):=\mathsf{h}[(-n,h)^{-1}]$ respectively, we write
\[
\downarrow_H\!(\boldsymbol{\alpha}*\mathsf{h})(m)=\sum_{h\in H}\sum_{n\in N} \boldsymbol{\alpha}_h(n)\,\mathsf{h}_h(m-n)=\sum_{h\in H} \big(\boldsymbol{\alpha}_h*_N \mathsf{h}_h\big)(m)\,, \quad m\in N\,.
\]
For a function $c: N\rightarrow \CC$, its {\em $H$-expander} $\uparrow_H\!c :G\rightarrow \CC$ is defined as
\[
(\uparrow_H\! c)(n,h)= \begin{cases}c(n)& \text{if $h=1_H$}\\
0& \text{if $h\neq1_H$}\,.\end{cases}
\]
In case $\uparrow_H\! c$ and $\mathsf{g}$ belong to $\ell^2(G)$ we have
\[
\begin{split}
(\uparrow_H\!c*\mathsf{g})(m,l)&=\sum_{(n,h)\in G} (\uparrow_H\!c)(n,h)\, \mathsf{g}\big[(n,h)^{-1}\cdot(m,l)\big]\\
&=\sum_{(n,h)\in G} (\uparrow_H\!c)(n,h)\, \mathsf{g}\big(\phi_{h^{-1}}(m-n),h^{-1}l\big)\\
&=\sum_{n\in N} c(n)\,\mathsf{g}(m-n,l)=\big(c*_N\mathsf{g}_l\big)(m)\,,\quad m\in N\,,\,l\in H\,,
\end{split}
\]
where $\mathsf{g}_l(n):=\mathsf{g}(n,l)$ is the polyphase component of $\mathsf{g}$.

From now on  we will refer to a {\em $K$-channel filter bank} with {\em analysis filters} $\mathsf{h}_k$ and {\em synthesis filters} $\mathsf{g}_k$, $k=1,2,\dots,K$ as the one given by (see Fig.~\ref{f1})
\begin{equation}
\label{fbank}
\mathbf{c}_k:=\downarrow_H\!(\boldsymbol{\alpha}*\mathsf{h}_k)\,, \,\,\text{$k=1,2,\dots,K$\,,\,\, and}\quad \boldsymbol{\beta}=\sum_{k=1}^K(\uparrow_H\!c_k)*\mathsf{g}_k\,,
\end{equation}
where $\boldsymbol{\alpha}$ and $\boldsymbol{\beta}$ denote, respectively, the input and the output of the filter bank. In polyphase notation,
\begin{equation}
\label{fbankpol}
\begin{split}
\mathbf{c}_k(m)&=\sum_{h\in H} \big(\boldsymbol{\alpha}_h*_N \mathsf{h}_{k,h}\big)(m)\,, \quad m\in N\,, \quad k=1,2,\dots,K\,,\\
\boldsymbol{\beta}_l(m)&=\sum_{k=1}^K \big(\mathbf{c}_k*_N\mathsf{g}_{l,k}\big)(m)\,,\quad m\in N\,,\,\, l\in H\,,
\end{split}
\end{equation}
where $\boldsymbol{\alpha}_h(n):=\alpha(n,h)$, $\boldsymbol{\beta}_l(n):=\beta(n,l)$, $\mathsf{h}_{k,h}(n):=\mathsf{h}_k[(-n,h)^{-1}]$ and $\mathsf{g}_{l,k}(n):=\mathsf{g}_k(n,l)$ are the {\em polyphase components} of $\boldsymbol{\alpha}$, $\boldsymbol{\beta}$, $\mathsf{h}_k$ and $\mathsf{g}_k$, $k=1,2,\dots,K$, respectively.
We also assume that $\mathsf{h}_k, \mathsf{g}_k \in \ell^2(G)$ with $\widehat{\mathsf{h}}_{k,h}, \widehat{\mathsf{g}}_{h,k} \in L^\infty(\widehat{N})$ for $k=1,2,\dots,K$ and $h\in H$; from Lemma \ref{conv} the filter bank \eqref{fbank} is well defined in $\ell^2(G)$.

\begin{figure}
\begin{center}
\includegraphics[viewport=5cm 23.4cm 12cm 25.6cm]{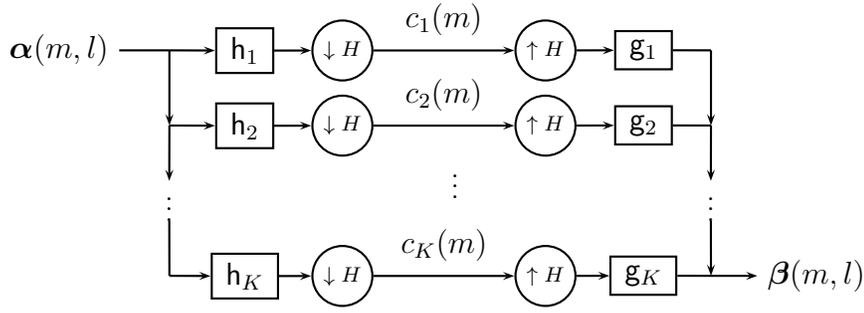}
\vspace{1cm}
 \caption{The K-channel filter bank scheme}\label{f1}
\end{center}
\end{figure}

\medskip 

The above $K$-channel filter bank \eqref{fbank} is said to be a {\em perfect reconstruction} filter bank if and only if it satisfies 
$\boldsymbol{\alpha}=\sum_{k=1}^K(\uparrow_H\!c_k)*\mathsf{g}_k$ for each $\boldsymbol{\alpha}\in \ell^2(G)$, or equivalently, $\boldsymbol{\alpha}_h=\sum_{k=1}^K \big(\mathbf{c}_k*_N\mathsf{g}_{h,k}\big)$ for each $h\in H$.

\medskip

Since $N$ is an LCA group where a Fourier  transform is available, the polyphase expression \eqref{fbankpol} of the filter bank \eqref{fbank} allows us to carry out its polyphase analysis.
\subsection{Polyphase analysis: Perfect reconstruction condition}
For notational ease, we denote $L:=|H|$, the order of the group $H$, and its elements as $H=\{h_1, h_2, \dots, h_L\}$. Having in mind Lemma \ref{conv},
the $N$-Fourier transform in $\mathbf{c}_k(m)=\sum_{h\in H} \big(\boldsymbol{\alpha}_h*_N \mathsf{h}_{k,h}\big)(m)$ gives $\widehat{\mathbf{c}}_k(\gamma)=\sum _{h\in H} \widehat{\mathsf{h}}_{k,h}(\gamma)\,\widehat{\boldsymbol{\alpha}}_h(\gamma)$ a.e. $\gamma \in \widehat{N}$ for each $k=1,2,\dots,K$. In matrix notation,
\[
\mathbf{C}(\gamma)= \mathbf{H}(\gamma)\, \mathbf{A}(\gamma)\quad \text{a.e.\,\, $\gamma \in \widehat{N}$}\,,
\]
where $\mathbf{C}(\gamma)=\big(\widehat{\mathbf{c}}_1(\gamma),\widehat{\mathbf{c}}_2(\gamma), \dots,\widehat{\mathbf{c}}_K(\gamma)\big)^\top$\,,\,\,$\mathbf{A}(\gamma)=\big(
\widehat{\boldsymbol{\alpha}}_{h_1}(\gamma), \widehat{\boldsymbol{\alpha}}_{h_2}(\gamma),\dots , \widehat{\boldsymbol{\alpha}}_{h_L}(\gamma)\big)^\top$, and  
$\mathbf{H}(\gamma)$ is the $K\times L$ matrix
\begin{equation}
\label{Hpol}
\mathbf{H}(\gamma)=\begin{pmatrix}
\widehat{\mathsf{h}}_{1,h_1}(\gamma) &  \widehat{\mathsf{h}}_{1,h_2}(\gamma) & \cdots & \widehat{\mathsf{h}}_{1,h_L}(\gamma) \\
\widehat{\mathsf{h}}_{2,h_1}(\gamma) &  \widehat{\mathsf{h}}_{2,h_2}(\gamma) & \cdots & \widehat{\mathsf{h}}_{2,h_L}(\gamma) \\
\cdots &  \cdots & \cdots & \cdots \\
\widehat{\mathsf{h}}_{K,h_1}(\gamma) &  \widehat{\mathsf{h}}_{K,h_2}(\gamma) & \cdots & \widehat{\mathsf{h}}_{K,h_L}(\gamma) \\
\end{pmatrix},
\end{equation}
where  $\widehat{\mathsf{h}}_{k,h_i}\in L^2(\widehat{N})$ is the Fourier transform of $\mathsf{h}_{k,h_i}(n):=\mathsf{h}_k[(-n,h_i)^{-1}]\in \ell^2(N)$. 

\medskip

\noindent The same procedure for $\boldsymbol{\beta}_l(m)=\sum_{k=1}^K \big(\mathbf{c}_k*_N\mathsf{g}_{l,k}\big)(m)$ gives $\widehat{\boldsymbol{\beta}}_l(\gamma)=\sum _{k=1}^K \widehat{\mathsf{g}}_{l,k}(\gamma)\,\widehat{\mathbf{c}}_k(\gamma)$ a.e. $\gamma \in \widehat{N}$. In matrix notation,
\[
\mathbf{B}(\gamma)= \mathbf{G}(\gamma)\, \mathbf{C}(\gamma)\quad \text{a.e.\,\, $\gamma \in \widehat{N}$}\,,
\]
where $\mathbf{B}(\gamma)=\big(\widehat{\boldsymbol{\beta}}_{h_1}(\gamma), \widehat{\boldsymbol{\beta}}_{h_2}(\gamma),\dots , \widehat{\boldsymbol{\beta}}_{h_L}(\gamma)\big)^\top$\,,\,\, $\mathbf{C}(\gamma)=\big(\widehat{\mathbf{c}}_1(\gamma),\widehat{\mathbf{c}}_2(\gamma), \dots,\widehat{\mathbf{c}}_K(\gamma)\big)^\top$ and $\mathbf{G}(\gamma)$ is the $L\times K$ matrix
\begin{equation}
\label{Gpol}
\mathbf{G}(\gamma)=\begin{pmatrix}
\widehat{\mathsf{g}}_{h_1,1}(\gamma) &  \widehat{\mathsf{g}}_{h_1,2}(\gamma) & \cdots & \widehat{\mathsf{g}}_{h_1,K}(\gamma) \\
\widehat{\mathsf{g}}_{h_2,1}(\gamma) &  \widehat{\mathsf{g}}_{h_2,2}(\gamma) & \cdots & \widehat{\mathsf{g}}_{h_2,K}(\gamma) \\
\cdots &  \cdots & \cdots & \cdots \\
\widehat{\mathsf{g}}_{h_L,1}(\gamma) &  \widehat{\mathsf{g}}_{h_L,2}(\gamma) & \cdots & \widehat{\mathsf{g}}_{h_L,K}(\gamma) \\
\end{pmatrix},
\end{equation}
where  $\widehat{\mathsf{g}}_{h_i,k}\in L^2(\widehat{N})$ is the Fourier transform of $\mathsf{g}_{h_i,k}(n):=\mathsf{g}_k(n,h_i)\in\ell^2(N)$. 

\medskip

Thus, in terms of the {\em polyphase matrices} $\mathbf{G}(\gamma)$ and $\mathbf{H}(\gamma)$ the filter bank \eqref{fbank} can be expressed as
\begin{equation}
\label{pfb}
\mathbf{B}(\gamma)= \mathbf{G}(\gamma)\,\mathbf{H}(\gamma)\, \mathbf{A}(\gamma)\quad \text{a.e.\,\, $\gamma \in \widehat{N}$}\,.
\end{equation}
As a consequence of \eqref{pfb} we have:
\begin{teo}
\label{PR}
The $K$-channel filter bank given in \eqref{fbank}, where $\mathsf{h}_k, \mathsf{g}_k$ belong to $ \ell^2(G)$ and $\widehat{\mathsf{h}}_{k,h_i}, \widehat{\mathsf{g}}_{h_i,k}$ belong to $ L^\infty(\widehat{N})$ for $k=1,2,\dots,K$ and $i=1,2,\dots,L$, satisfies the perfect reconstruction property if and only if  $\mathbf{G}(\gamma) \,\mathbf{H}(\gamma) =\mathbf{I}_L$\, a.e. $\gamma \in \widehat{N}$, where $\mathbf{I}_L$ denotes the identity matrix of order $L$.
\end{teo}
\begin{proof}
First of all, note that the mapping $\boldsymbol{\alpha}\in \ell^2(G) \mapsto \mathbf{A}\in L^2_L(\widehat{N})$ is a unitary operator. Indeed, for each $\boldsymbol{\alpha},\boldsymbol{\beta}\in \ell^2(G)$ we have the isometry property
\[
\begin{split}
\langle \boldsymbol{\alpha},\boldsymbol{\beta}\rangle_{\ell^2(G)}&=\sum_{(m,h)\in G} \alpha(m,h)\,\overline{\beta(m,h)}=\sum_{h\in H}\langle \boldsymbol{\alpha}_h,\boldsymbol{\beta}_h\rangle_{\ell^2(N)}\\
&=\sum_{h\in H}\langle \widehat{\boldsymbol{\alpha}}_h,\widehat{\boldsymbol{\beta}}_h\rangle_{L^2(\widehat{N})}=\langle \mathbf{A}, \mathbf{B}\rangle_{L^2_L(\widehat{N})}\,.
\end{split}
\]
It is also surjective since the $N$-Fourier transform is a surjective isometry between $\ell^2(N)$ and $L^2(\widehat{N})$.
Having in mind this property,  Eq.~\eqref{pfb} tell us that the filter bank satisfies the perfect reconstruction property if and only if $\mathbf{G}(\gamma)\,\mathbf{H}(\gamma)=\mathbf{I}_L$ a.e. $\gamma \in \widehat{N}$. 
\end{proof}

Notice that, in the perfect reconstruction setting, the number of channels $K$ must be necessarily bigger or equal that the order $L$ of the group $H$, i.e., $K\geq L$.

\section{Frame analysis}
For $m\in N$ the {\em translation operator} $T_m: \ell^2(G) \rightarrow \ell^2(G)$ is defined as
\begin{equation}
\label{translation}
T_m \boldsymbol{\alpha}(n,h):=\alpha\big((m,1_H)^{-1}\cdot(n,h)\big)=\alpha(n-m,h)\,, \,(n,h)\in G\,.
\end{equation}
The {\em involution operator} $\boldsymbol{\alpha}\in \ell^2(G) \mapsto \widetilde{\boldsymbol{\alpha}}\in \ell^2(G)$ is defined as 
$\widetilde{\alpha}(n,h):=\overline{\alpha\big((n,h)^{-1} \big)}$, $(n,h)\in G$. As expected, the classical relationship between convolution and translation operators holds. Thus, for the $K$-channel filter bank \eqref{fbank} we have (see \eqref{dec}):
\[
\mathbf{c}_k(m)=\downarrow_H\!(\boldsymbol{\alpha}*\mathsf{h}_k)(m)=\big\langle \boldsymbol{\alpha}, T_m \widetilde{\mathsf{h}}_k \big\rangle_{\ell^2(G)}\,, \quad m\in N\,, \,\, k=1,2,\dots,K\,.
\]
Besides,
\[
(\uparrow_H\!\mathbf{c}_k*\mathsf{g}_k)(m,h)=\sum_{n\in N} \mathbf{c}_k(n)\,\mathsf{g}_k(m-n,h)=\sum_{n\in N}\langle \boldsymbol{\alpha}, T_n \widetilde{\mathsf{h}}_k \rangle_{\ell^2(G)}\, T_n\mathsf{g}_k(m,h)\,.
\]
In the perfect reconstruction setting, for any $\boldsymbol{\alpha}\in \ell^2(G)$ we have
\begin{equation}
\label{perfect}
\boldsymbol{\alpha}=\sum_{k=1}^K\sum_{n\in N}\langle \boldsymbol{\alpha}, T_n \widetilde{\mathsf{h}}_k \rangle_{\ell^2(G)}\, T_n\mathsf{g}_k\quad \text{in $\ell^2(G)$}\,.
\end{equation}
Given  $K$ sequences $\mathsf{f}_k\in \ell^2(G)$, $k=1,2,\dots,K$, our main tasks now are: $(i)$ to characterize the  sequence $\big\{T_n \mathsf{f}_k\big\}_{n\in N;\,k=1,2,\dots K}$ as a frame for $\ell^2(G)$, and $(ii)$ to find its dual frames having the form $\big\{T_n \mathsf{g}_k\big\}_{n\in N;\,k=1,2,\dots K}$. 

\medskip

To the first end we consider a $K$-channel  analysis filter bank with analysis filters $\mathsf{h}_k:=\widetilde{\mathsf{f}}_k$, $k=1,2,\dots,K$; let $\mathbf{H}(\gamma)$ be its associated $K\times L$ polyphase matrix \eqref{Hpol}. First, we check that \eqref{Hpol} is:
\begin{equation}
\label{Hbf}
\mathbf{H}(\gamma)=\Big(\,\overline{\widehat{\mathsf{f}}_{k,h_i}(\gamma)}\,\Big)_{\substack{k=1,2,\ldots,K \\ i=1,2,\ldots, L}}\,.
\end{equation} 
Indeed, for $k=1,2,\dots,K$ and $i=1,2,\ldots, L$ we have
\[
\begin{split}
\widehat{\mathsf{h}}_{k,h_{i}}(\gamma)&=\sum_{n\in N} \mathsf{h}_{k,h_{i}}(n) 
\langle -n,\gamma \rangle=\sum_{n\in N} \mathsf{h}_{k}[(-n,h_{i})^{-1}] 
\langle -n,\gamma \rangle=\sum_{n\in N} \tilde{\mathsf{f}}_{k}[(-n,h_{i})^{-1}] 
\langle -n,\gamma \rangle\\
&=\sum_{n\in N} \overline{\mathsf{f}_{k}(-n,h_{i}) }\langle -n,\gamma \rangle= \overline{\sum_{n\in N} \mathsf{f}_{k}(n,h_{i}) 
\langle -n,\gamma \rangle} = \overline{\widehat{\mathsf{f}}_{k,h_{i}}(\gamma)}\,,\quad \gamma\in \widehat{N}\,.
\end{split}
\]
Next, we consider its associated constants 
\[
A_{\mathbf{H}}:=\einf_{\gamma \in \widehat{N}} \lambda_{\min}\big[\mathbf{H}^*(\gamma)\mathbf{H}(\gamma)\big]\quad \text{and}\quad B_{\mathbf{H}}:=\esup_{\gamma \in \widehat{N}} \lambda_{\max}\big[\mathbf{H}^*(\gamma)\mathbf{H}(\gamma)\big]\,.
\]
\begin{teo}
\label{besselframe}
For $\mathsf{f}_k$ in $\ell^2(G)$, $k=1,2,\dots,K$, consider the associated matrix $\mathbf{H}(\gamma)$ given in \eqref{Hbf}. Then,
\begin{enumerate}
\item The sequence $\big\{T_n \mathsf{f}_k\big\}_{n\in N;\,k=1,2,\dots K}$ is a Bessel sequence for $\ell^2(G)$ if and only if $B_{\mathbf{H}}<\infty$.
\item The sequence $\big\{T_n \mathsf{f}_k\big\}_{n\in N;\,k=1,2,\dots K}$ is a frame for $\ell^2(G)$ if and only if the inequalities $0<A_{\mathbf{H}}\le B_{\mathbf{H}}<\infty$ hold.
\end{enumerate}
\end{teo}
\begin{proof}
Using Plancherel theorem \cite[Theorem 4.25]{folland:95}, for each $\boldsymbol{\alpha}\in \ell^2(G)$ we get
\[
\begin{split}
\langle \boldsymbol{\alpha},T_{n}\mathsf{f}_{k}\rangle_{\ell^2(G)}&=
\sum_{h\in H} \langle \boldsymbol{\alpha}_{h},\mathsf{f}_{k,h}(\cdot-n)\rangle_{\ell^2(N)}=
\sum_{h\in H} \int_{\widehat{N}} \widehat{\boldsymbol{\alpha}}_{h}(\gamma)\overline{\widehat{\mathsf{f}}_{k,h}(\gamma)\langle -n,\gamma \rangle}d\gamma\\
&=\int_{\widehat{N}} \sum_{h\in H} \widehat{\boldsymbol{\alpha}}_{h}(\gamma)\overline{\widehat{\mathsf{f}}_{k,h}(\gamma)} \, \overline{ \langle -n,\gamma \rangle} d\gamma=  \int_{\widehat{N}} \mathbf{H}_k(\gamma) \mathbf{A}(\gamma)\overline{ \langle -n,\gamma \rangle} d\gamma\,,
\end{split}
\]
where $\mathbf{A}(\gamma)=\big(\widehat{\boldsymbol{\alpha}}_{h_1}(\gamma), \widehat{\boldsymbol{\alpha}}_{h_2}(\gamma),\dots , \widehat{\boldsymbol{\alpha}}_{h_L}(\gamma)\big)^\top$ and $\mathbf{H}_k(\gamma)$ denotes the $k$-th row of $\mathbf{H}(\gamma)$.

Since  $\big\{\langle -n,\gamma \rangle\big\}_{n\in N}$ is an orthonormal basis for $L^2(\widehat{N})$, in case that
$\mathbf{H}(\gamma)\mathbf{A}(\gamma)\in L_{K}^2(\widehat{N})$ we have
\[
\sum_{k=1}^K \sum_{n\in N} | \langle \boldsymbol{\alpha},T_{n}\mathsf{f}_{k} \rangle|^2=
\sum_{k=1}^K \int_{\widehat{N}} \big|\mathbf{H}_k(\gamma) \mathbf{A}(\gamma)\ \big|^2 d\gamma=
\int_{\widehat{N}} \big\| \mathbf{H}(\gamma) \mathbf{A}(\gamma) \big\|^2 d\gamma\,.
\]
If $B_{\mathbf{H}}<\infty$, having in mind that $\|\boldsymbol{\alpha}\|^2_{\ell^2(G)}=\|\mathbf{A}\|^2_{L_{L}^2(\widehat{N})}=
\int_{\widehat{N}} \big\| \mathbf{A}(\gamma) \big\|^2 d\gamma$, the above equality and the Rayleigh-Ritz theorem \cite[Theorem 4.2.2]{horn:99} prove that
$\{T_{n}\mathsf{f}_{k}\}_{n\in N;\,k=1,2,\ldots,K}$ is a  Bessel sequence for $\ell^2(G)$ with Bessel bound less or equal than $B_{\mathbf{H}}$. 

On the other hand, if $K<B_{\mathbf{H}}$ then there exists a set $\Omega\subset \widehat{N}$ having null measure such that $\lambda_{\max}\big[\mathbf{H}^*(\gamma)\mathbf{H}(\gamma)\big]>K$ for $\gamma\in \Omega$. Consider 
$\boldsymbol{\alpha}$ such that its associated $\mathbf{A}(\gamma)$ 
is $0$ if $\gamma\notin \Omega$, and $\mathbf{A}(\gamma)$ is a unitary eigenvector corresponding to the largest eigenvalue of $\mathbf{H}^*(\gamma)\mathbf{H}(\gamma)$ if $\gamma\in \Omega$. Thus we have that
\[
\sum_{k=1}^K \sum_{n\in N} | \langle \boldsymbol{\alpha},T_{n}\mathsf{f}_{k} \rangle|^2= \int_{\widehat{N}} 
\big\| \mathbf{H}(\gamma)
 \mathbf{A}(\gamma) \big\|^2 d\gamma> K \int_{\widehat{N}} 
\big\| 
 \mathbf{A}(\gamma) \big\|^2 d\gamma= K \, \|\boldsymbol{\alpha}\|^2_{\ell^2(G)}
\] 
As a consequence, if $B_{\mathbf{H}}=\infty$ the sequence is not Bessel, and if $B_{\mathbf{H}}<\infty$ the optimal bound is precisely $B_{\mathbf{H}}$. 

Similarly, by using inequality $\big\| \mathbf{H}(\gamma) \mathbf{A}(\gamma) \big\|^2\ge \lambda_{\min}\big[\mathbf{H}^*(\gamma)\mathbf{H}(\gamma)\big] \big\| \mathbf{A}(\gamma) \big\|^2$, and that equality holds whenever $\mathbf{A}(\gamma)$ is a unitary eigenvector corresponding to the smallest eigenvalue of 
$\mathbf{H}^*(\gamma)\mathbf{H}(\gamma)$ one proves the other inequality in part 2.
\end{proof}

\begin{cor}
\label{bessel}
The sequence $\big\{T_n \mathsf{f}_k\big\}_{n\in N;\,k=1,2,\dots K}$ is a Bessel sequence for $\ell^2(G)$ if and only if for each $k=1,2,\dots, K$ and $i=1,2,\dots,L$ the function $\widehat{\mathsf{f}}_{k,h_i}$ belongs to $L^\infty(\widehat{N})$.
\end{cor}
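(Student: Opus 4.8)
The plan is to reduce the statement to part 1 of Theorem~\ref{besselframe}, which characterizes the Bessel property of $\big\{T_n\mathsf{f}_k\big\}_{n\in N;\,k=1,\dots,K}$ by the condition $B_{\mathbf{H}}=\esup_{\gamma\in\widehat{N}}\lambda_{\max}\big[\mathbf{H}^*(\gamma)\mathbf{H}(\gamma)\big]<\infty$, together with the explicit form \eqref{Hbf} of $\mathbf{H}(\gamma)$, whose $(k,i)$ entry is $\overline{\widehat{\mathsf{f}}_{k,h_i}(\gamma)}$ so that $\big|\mathbf{H}(\gamma)_{k,i}\big|=\big|\widehat{\mathsf{f}}_{k,h_i}(\gamma)\big|$. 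Thus it suffices to show that $B_{\mathbf{H}}<\infty$ is equivalent to $\widehat{\mathsf{f}}_{k,h_i}\in L^\infty(\widehat{N})$ for all $k=1,\dots,K$ and $i=1,\dots,L$.

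First I would record the elementary two-sided bound relating the spectral norm of a fixed $K\times L$ complex matrix $M$ to the size of its entries:
\[
\max_{k,i}|M_{k,i}|^2 \ \le\ \lambda_{\max}\big(M^*M\big)\ \le\ \sum_{k=1}^K\sum_{i=1}^L |M_{k,i}|^2\ \le\ KL\,\max_{k,i}|M_{k,i}|^2 .
\]
The left inequality comes from $\|M\|\ge\|M e_i\|\ge|M_{k,i}|$, the middle one is $\lambda_{\max}(M^*M)\le\operatorname{tr}(M^*M)$ (the matrix $M^*M$ being positive semidefinite), and the right one is trivial. Applying this pointwise a.e.\ to $M=\mathbf{H}(\gamma)$ and taking $\esup$ over $\gamma\in\widehat{N}$ yields
\[
\esup_{\gamma\in\widehat{N}}\,\max_{k,i}\big|\widehat{\mathsf{f}}_{k,h_i}(\gamma)\big|^2 \ \le\ B_{\mathbf{H}}\ \le\ KL\,\esup_{\gamma\in\widehat{N}}\,\max_{k,i}\big|\widehat{\mathsf{f}}_{k,h_i}(\gamma)\big|^2 .
\]
Since $H$ and $K$ are finite, $\esup$ commutes with the maximum over $k,i$, so $B_{\mathbf{H}}<\infty$ if and only if $\esup_{\gamma}\big|\widehat{\mathsf{f}}_{k,h_i}(\gamma)\big|<\infty$ for every $k$ and $i$, i.e.\ if and only if each $\widehat{\mathsf{f}}_{k,h_i}$ belongs to $L^\infty(\widehat{N})$. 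Combined with Theorem~\ref{besselframe}(1) this gives the corollary.

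There is essentially no serious obstacle here: the argument is just the equivalence of norms on the finite-dimensional space of $K\times L$ matrices transported through the Fourier isometry. The only points deserving a word of care are that the matrix inequalities are invoked for a.e.\ $\gamma$ and that, because $\mathsf{f}_k\in\ell^2(G)$ already forces $\mathsf{f}_{k,h_i}\in\ell^2(N)$ and hence $\widehat{\mathsf{f}}_{k,h_i}\in L^2(\widehat{N})$, membership in $L^\infty(\widehat{N})$ is genuinely the only extra requirement being characterized.
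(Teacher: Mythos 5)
Your argument is correct and is essentially the paper's proof: the paper disposes of the corollary in one line by invoking the equivalence of the spectral and Frobenius norms of $\mathbf{H}(\gamma)$, which is exactly the two-sided entrywise bound on $\lambda_{\max}\big[\mathbf{H}^*(\gamma)\mathbf{H}(\gamma)\big]$ you wrote out, combined with Theorem~\ref{besselframe}(1). Your version merely makes the pointwise a.e.\ inequalities and the passage to the essential supremum explicit.
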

\begin{proof}
It is  a direct consequence of the equivalence between the spectral and Frobenius norms for matrices \cite{horn:99}.
\end{proof}

\medskip

To the second end, a $K$-channel filter bank formalism allows, in a similar manner, to obtain properties in $\ell^2(G)$ of the sequences $\big\{T_n \mathsf{f}_k\big\}_{n\in N;\,k=1,2,\dots K}$ and $\big\{T_n \mathsf{g}_k\big\}_{n\in N;\,k=1,2,\dots K}$.  In case they are Bessel sequences for $\ell^2(G)$, the idea is to consider a $K$-channel filter bank \eqref{fbank} where the analysis filters are $\mathsf{h}_k:=\widetilde{\mathsf{f}}_k$ and the synthesis filters are $ \mathsf{g}_k$, $k=1, 2,\dots,K$. As a consequence, the corresponding polyphase matrices $\mathbf{H}(\gamma)$ and  $\mathbf{G}(\gamma)$,  given in \eqref{Hpol} and \eqref{Gpol} are, 
\begin{equation}
\label{HGpol}
\mathbf{H}(\gamma)=\Big(\,\overline{\widehat{\mathsf{f}}_{k,h_i}(\gamma)}\,\Big)_{\substack{k=1,2,\ldots,K \\ i=1,2,\ldots, L}}\quad \text{and}\quad \mathbf{G}(\gamma)=\Big(\widehat{\mathsf{g}}_{h_i,k}(\gamma)\Big)_{\substack{i=1,2,\ldots,L \\ k=1,2,\ldots, K}}\,,\quad \gamma\in \widehat{N}\,.
\end{equation}
\begin{teo}
\label{translates}
Let $\big\{T_n \mathsf{f}_k\big\}_{n\in N;\,k=1,2,\dots K}$ and $\big\{T_n \mathsf{g}_k\big\}_{n\in N;\,k=1,2,\dots K}$ be two  Bessel sequences for $\ell^2(G)$, and $\mathbf{H}(\gamma)$ and $\mathbf{G}(\gamma)$ their associated matrices \eqref{HGpol}. Under the above circumstances we have:
\begin{enumerate}[(a)]
\item The sequences $\big\{T_n \mathsf{f}_k\big\}_{n\in N;\,k=1,2,\dots K}$ and $\big\{T_n \mathsf{g}_k\big\}_{n\in N;\,k=1,2,\dots K}$ are dual frames 
 for $\ell^2(G)$ if and only if condition $\mathbf{G}(\gamma)\mathbf{H}(\gamma)=\mathbf{I}_{L}$ a.e. $\gamma\in \widehat{N}$ holds.
\item The sequences $\big\{T_n \mathsf{f}_k\big\}_{n\in N;\,k=1,2,\dots K}$ and $\big\{T_n \mathsf{g}_k\big\}_{n\in N;\,k=1,2,\dots K}$ are biorthogonal sequences in 
$\ell^2(G)$ if and only if condition $\mathbf{H}(\gamma)\mathbf{G}(\gamma)=\mathbf{I}_{K}$ a.e. $\gamma\in \widehat{N}$ holds.
\item The sequences $\big\{T_n \mathsf{f}_k\big\}_{n\in N;\,k=1,2,\dots K}$ and $\big\{T_n \mathsf{g}_k\big\}_{n\in N;\,k=1,2,\dots K}$ are dual Riesz bases for 
$\ell^2(G)$ if and only if $K=L$ and $\mathbf{G}(\gamma)=\mathbf{H}(\gamma)^{-1}$ a.e. $\gamma\in \widehat{N}$.
\item The sequence $\big\{T_n \mathsf{f}_k\big\}_{n\in N;\,k=1,2,\dots K}$ is an $A$-tight frame for $\ell^2(G)$ if and only if condition $\mathbf{H}^*(\gamma)\mathbf{H}(\gamma)=A\mathbf{I}_{L}$ a.e. $\gamma\in \widehat{N}$ holds.
\item The sequence $\big\{T_n \mathsf{f}_k\big\}_{n\in N;\,k=1,2,\dots K}$ is an orthonormal basis for $\ell^2(G)$ if and only if $K=L$ and $\mathbf{H}^*(\gamma)=\mathbf{H}(\gamma)^{-1}$ a.e. $\gamma\in \widehat{N}$.
\end{enumerate}
\end{teo}
\begin{proof}
Having in mind \eqref{perfect} and Corollary \ref{bessel}, part $(a)$ is nothing but Theorem \ref{PR}. \\ 
The output of the analysis filter bank \eqref{fbank} corresponding to the input $\mathsf{g}_{k'}$  is a $K$-vector which $k$-entry is
\[
c_{k,k'}(m)=\downarrow_H\!(\mathsf{g}_{k'}*_N\mathsf{h}_k)(m)=\langle \mathsf{g}_{k'}, T_m \widetilde{\mathsf{h}}_k \rangle_{\ell^2(G)}=\langle \mathsf{g}_{k'}, T_m \mathsf{f}_k \rangle_{\ell^2(G)}\,,
\]
and whose $N$-Fourier transform is $\mathbf{C}_{k'}(\gamma)= \mathbf{H}(\gamma)\, \mathbf{G}_{k'}(\gamma)$ a.e. 
$\gamma \in \widehat{N}$,
where $\mathbf{G}_{k'}$ is the $k'$-column of the matrix $\mathbf{G}(\gamma)$.
Note that $\big\{T_n \mathsf{f}_k\big\}_{n\in N;\,k=1,2,\dots K}$ and $\big\{T_n \mathsf{g}_k\big\}_{n\in N;\,k=1,2,\dots K}$ are biorthogonal if and only if 
$\langle \mathsf{g}_{k'}, T_m \mathsf{f}_k \rangle_{\ell^2(G)}=\delta(k-k')\delta(m)$. Therefore, the sequences $\big\{T_n \mathsf{f}_k\big\}_{n\in N;\,k=1,2,\dots K}$ and $\big\{T_n \mathsf{g}_k\big\}_{n\in N;\,k=1,2,\dots K}$ are biorthogonal if and only if $\mathbf{H}(\gamma)\mathbf{G}(\gamma)=\mathbf{I}_{K}$. Thus, we have proved $(b)$. \\
 Having in mind \cite[Theorem 7.1.1]{ole:16}, from $(a)$ and $(b)$ we obtain $(c)$. \\
 We can read the frame operator corresponding to the sequence $\big\{T_n \mathsf{f}_k\big\}_{n\in N;\,k=1,2,\dots, K}$, i.e., 
\[
\mathcal{S}(\boldsymbol{\alpha})=\sum_{k=1}^K\sum_{n\in N}\langle \boldsymbol{\alpha}, T_n  \mathsf{f}_{k}\rangle_{\ell^2(G)}\, T_n\mathsf{f}_k,\quad \boldsymbol{\alpha}\in \ell^2(G)\,,
\]
as the output of the filter bank \eqref{fbank}, whenever $\mathsf{h}_{k}=\widetilde{\mathsf{f}}_k$ and $\mathsf{g}_{k}=\mathsf{f}_{k}$, for the input $\boldsymbol{\alpha}$. For this filter bank,  the $(k,h_l)$-entry of the analysis polyphase matrix $\mathbf{H}(\gamma)$ is $\overline{\widehat{\mathsf{f}}_{k,h_l}(\gamma)}$ and
the $(h_l,k)$-entry of the synthesis polyphase matrix $\mathbf{G}(\gamma)$ is $\widehat{\mathsf{f}}_{k,h_l}(\gamma)$; in other words, $\mathbf{G}(\gamma)=\mathbf{H}^*(\gamma)$. Hence, the sequence $\big\{T_n \mathsf{f}_k\big\}_{n\in N;\,k=1,2,\dots, K}$ is an $A$-tight  frame for $\ell^2(G)$, i.e.,
\[
\boldsymbol{\alpha}=\frac{1}{A}\sum_{k=1}^K\sum_{n\in N}\langle \boldsymbol{\alpha}, T_n  \mathsf{f}_{k}\rangle_{\ell^2(G)}\, T_n\mathsf{f}_k,\quad \boldsymbol{\alpha} \in \ell^2(G)\,,
\]
if and only if $\mathbf{H}^*(\gamma) \,\mathbf{H}(\gamma) =A\mathbf{I}_L$ for all $\gamma\in \widehat{N}$. Thus, we have proved $(d)$. \\
Finally, from $(c)$ and $(d)$ the sequence $\big\{T_n \mathsf{f}_k\big\}_{n\in N;\,k=1,2,\dots K}$ is an orthonormal system if and only if 
$\mathbf{H}^*(\gamma)=\mathbf{H}(\gamma)^{-1}$ a.e. $\gamma\in \widehat{N}$.
\end{proof}
\section{Getting on with sampling}
\label{section4}
Suppose that $\big\{U(n,h)\big\}_{(n,h)\in G}$ is a unitary representation of the group $G=N\rtimes_\phi H$ on a separable Hilbert space $\Hc$, and assume that for a fixed $a\in \Hc$ the sequence $\big\{U(n,h)a\big\}_{(n,h)\in G}$ is a Riesz sequence for $\Hc$ (see Ref.~\cite[Theorem A]{barbieri:15}). Thus, we consider the $U$-invariant subspace in $\Hc$
\[
\Ac_a=\Big\{ \sum_{(n,h)\in G} \alpha(n,h)\,U(n,h)a\,\, :\,\, \{\alpha(n,h)\}_{(n,h)\in G}\in \ell^2(G) \Big\}\,.
\]
For $K$ fixed elements $b_k\in \Hc$, $k=1,2, \dots ,K$, non necessarily in $\Ac$, we consider for each $x\in \Ac$ its generalized samples defined as
\begin{equation}
\label{samples}
\Lc_k x(m):=\big\langle x, U(m, 1_H)\,b_k \big\rangle_\Hc\,,\quad \text{ $m\in N$ and $k=1,2,\dots, K$}\,.
\end{equation}
The problem is the stable recovery of any $x\in \Ac$ from the data $\big\{ \Lc_k x(m)\big\}_{m\in N;\,k=1,2,\dots, K}$. 

\medskip

In what follows, we propose a solution involving a perfect reconstruction $K$-channel filter bank. First, we express the samples in a more suitable manner. Namely, for each $x=\sum_{(n,h)\in G} \alpha(n,h)\,U(n,h)\,a$ in $\Ac_a$ we have
\[
\begin{split}
\Lc_k x(m)&= \sum_{(n,h)\in G} \alpha(n,h) \big\langle U(n,h)\,a, U(m, 1_H)\,b_k \big\rangle \\
& = \sum_{(n,h)\in G} \alpha(n,h) \big\langle a, U\big[(n,h)^{-1}\cdot(m, 1_H)\big]\,b_k \big\rangle=\downarrow_H\!(\boldsymbol{\alpha}*\mathsf{h}_k)(m)\,, \quad m\in N\,,
\end{split}
\]
where $\boldsymbol{\alpha}=\{\alpha(n,h)\}_{(n,h)\in G} \in \ell^2(G)$, and  $\mathsf{h}_k(n,h):=\big\langle a, U(n,h)\,b_k\big\rangle_\Hc$ also belongs to 
$\ell^2(G)$ for each $k=1,2, \dots ,K$. 

Suppose also that there exists  a perfect reconstruction $K$-channel filter-bank with analysis filters the above $\mathsf{h}_k$ and synthesis filters $\mathsf{g}_k$, $k=1,2, \dots ,K$, such that the sequences $\big\{T_n \widetilde{\mathsf{h}}_k\big\}_{n\in N;\,k=1,2,\dots K}$ and $\big\{T_n \mathsf{g}_k\big\}_{n\in N;\,k=1,2,\dots K}$ are Bessel sequences for $\ell^2(G)$. Having in mind \eqref{perfect}, for each 
$\boldsymbol{\alpha}=\{\alpha(n,h)\}_{(n,h)\in G}$ in $\ell^2(G)$ we have
\begin{equation}
\label{sperfect}
\boldsymbol{\alpha}=\sum_{k=1}^K\sum_{n\in N}\downarrow_H\!(\boldsymbol{\alpha}*\mathsf{h}_k)(n)\, T_n\mathsf{g}_k 
=\sum_{k=1}^K\sum_{n\in N} \Lc_k x(n)\,T_n\mathsf{g}_k \quad \text{in $\ell^2(G)$}\,.
\end{equation}

In order to derive a sampling formula in $\Ac_a$, we consider the natural isomorphism $\Tc_{U,a}\,:\,\ell^2(G) \rightarrow \Ac_a$ which maps the usual orthonormal basis 
$\{\boldsymbol{\delta}_{(n,h)}\}_{(n,h)\in G}$ for $\ell^2(G)$ onto the Riesz basis $\big\{U(n,h)\,a\big\}_{(n,h)\in G}$ for $\Ac_a$, i.e., 
\[
\Tc_{U,a}\,:\, \boldsymbol{\delta}_{(n,h)} \longmapsto U(n,h) a\,\, \text{ for each $(n,h)\in G$}\,.
\]
This isomorphism $\Tc_{U,a}$ possesses the following shifting property:
\begin{lema}
For each $m\in N$, consider the translation operator $T_m$ operator defined in \eqref{translation}. For each  $m\in N$, the following shifting property holds
\begin{equation}
\label{shifting}
\Tc_{U,a}\big(T_m \mathsf{f}\big)=U(m, 1_H) \big(\Tc_{U,a}\mathsf{f}\big)\,, \quad \mathsf{f}\in \ell^2(G)\,.
\end{equation}
\end{lema}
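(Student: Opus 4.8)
The plan is to verify the identity \eqref{shifting} first on the orthonormal basis $\{\boldsymbol{\delta}_{(n,h)}\}_{(n,h)\in G}$ of $\ell^2(G)$ and then extend it to arbitrary $\mathsf{f}\in\ell^2(G)$ by linearity and continuity, using that both $T_m$ and $\Tc_{U,a}$ are bounded operators.

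First I would compute the action of $T_m$ on a basis element. From the definition \eqref{translation} we have $T_m\boldsymbol{\delta}_{(n,h)}(p,l)=\boldsymbol{\delta}_{(n,h)}(p-m,l)$, which is nonzero exactly when $p=n+m$ and $l=h$; hence $T_m\boldsymbol{\delta}_{(n,h)}=\boldsymbol{\delta}_{(n+m,h)}$. Applying $\Tc_{U,a}$ and using its defining property $\Tc_{U,a}\boldsymbol{\delta}_{(n,h)}=U(n,h)a$ then gives
$\Tc_{U,a}\big(T_m\boldsymbol{\delta}_{(n,h)}\big)=\Tc_{U,a}\big(\boldsymbol{\delta}_{(n+m,h)}\big)=U(n+m,h)a$.

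Next I would evaluate the right-hand side on the same basis element. Since $\{U(n,h)\}_{(n,h)\in G}$ is a unitary representation of $G$, we have $U(m,1_H)U(n,h)=U\big((m,1_H)\cdot(n,h)\big)$. The semi-direct product multiplication rule together with $\phi_{1_H}=Id_N$ yields $(m,1_H)\cdot(n,h)=\big(m+\phi_{1_H}(n),1_Hh\big)=(m+n,h)$, so that
$U(m,1_H)\big(\Tc_{U,a}\boldsymbol{\delta}_{(n,h)}\big)=U(m,1_H)U(n,h)a=U(m+n,h)a$,
which coincides with the left-hand side computed above. Thus \eqref{shifting} holds on every $\boldsymbol{\delta}_{(n,h)}$.

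Finally, writing any $\mathsf{f}\in\ell^2(G)$ as $\mathsf{f}=\sum_{(n,h)\in G}\mathsf{f}(n,h)\,\boldsymbol{\delta}_{(n,h)}$ with convergence in $\ell^2(G)$, and using that $\Tc_{U,a}\circ T_m$ and $U(m,1_H)\circ\Tc_{U,a}$ are bounded (hence continuous and linear), I can interchange each of them with the series; the basis identity then gives
$\Tc_{U,a}\big(T_m\mathsf{f}\big)=\sum_{(n,h)\in G}\mathsf{f}(n,h)\,U(m+n,h)a=U(m,1_H)\big(\Tc_{U,a}\mathsf{f}\big)$,
which is exactly \eqref{shifting}. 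There is no genuine obstacle in this argument; the only point requiring a moment's care is the elementary group computation $(m,1_H)\cdot(n,h)=(m+n,h)$, i.e.\ the fact that left translation in $G$ by elements of $N\times\{1_H\}$ corresponds, under the unitary $\Tc_{U,a}$, precisely to the additive shift $T_m$ in the first coordinate.
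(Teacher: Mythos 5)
Your proof is correct and follows essentially the same route as the paper: verify the identity on the basis vectors $\boldsymbol{\delta}_{(n,h)}$ via $T_m\boldsymbol{\delta}_{(n,h)}=\boldsymbol{\delta}_{(m+n,h)}$ and $U(m,1_H)U(n,h)=U(m+n,h)$, then extend to all of $\ell^2(G)$ by linearity and continuity of the bounded operators involved. Your write-up merely makes explicit the group computation and the series argument that the paper compresses into ``a continuity argument.''
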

\begin{proof}
For each $\boldsymbol{\delta}_{(n,h)}$ it is easy to check that $T_m\boldsymbol{\delta}_{(n,h)}=\boldsymbol{\delta}_{(m+n,h)}$. Hence,
\[
\Tc_{U,a}\big(T_m\boldsymbol{\delta}_{(n,h)}\big)=U(m+n,h)\,a=U(m,1_H)U(n,h)\,a=U(m, 1_H) \big(\Tc_{U,a}\boldsymbol{\delta}_{(n,h)}\big)\,.
\]
A continuity argument proves the result for all $\mathsf{f}$ in $\ell^2(G)$.
\end{proof}
Now for each $x=\Tc_{U,a} \boldsymbol{\alpha}\in \Ac_a$, applying the isomorphism $\Tc_{U,a}$ and the shifting property \eqref{shifting} in \eqref{sperfect}, we get for each $x\in \Ac_a$ the expansion
\begin{equation}
\label{sformula}
\begin{split}
x&=\sum_{k=1}^K\sum_{n\in N} \Lc_k x(n)\,\Tc_{U,a}\big(T_n\mathsf{g}_k\big)=\sum_{k=1}^K\sum_{n\in N} \Lc_k x(n)\,U(n,1_H)\big(\Tc_{U,a}\mathsf{g}_k\big)\\
&=\sum_{k=1}^K\sum_{n\in N} \Lc_k x(n)\,U(n,1_H)c_{k,\mathsf{g}} \quad \text{ in $\Hc$}\,,
\end{split}
\end{equation}
where $c_{k,\mathsf{g}}=\Tc_{U,a}\mathsf{g}_k$, $k=1,2,\dots, K$. In fact, the following sampling theorem in the subspace $\Ac_a$ holds:
\begin{teo}
\label{tsampling}
For $K$ fixed $b_k\in \Hc$, let $\Lc_k: N \rightarrow \CC$ be its associated $U$-system defined in \eqref{samples} with corresponding $\mathsf{h}_k \in \ell^2(G)$, $k=1,2,\dots,K$. Assume that its polyphase matrix $\mathbf{H}(\gamma)$ given in \eqref{Hpol} has all its entries in $L^\infty(\widehat{N})$. The following statements are equivalent:
\begin{enumerate}
\item The constant $A_{\mathbf{H}}=\displaystyle{\einf_{\gamma \in \widehat{N}} \lambda_{\min}\big[\mathbf{H}^*(\gamma)\mathbf{H}(\gamma)\big]>0}$.
\item There exist $\mathsf{g}_k$ in $\ell^2(G)$, $k=1,2,\dots,K$, such that the associated polyphase matrix $\mathbf{G}(\gamma)$ given in \eqref{Gpol} has all its entries in $L^\infty(\widehat{N})$, and it satisfies $\mathbf{G}(\gamma)\mathbf{H}(\gamma)=\mathbf{I}_{L}$ a.e. $\gamma\in \widehat{N}$.
\item There exist $K$ elements $c_k\in \Ac_a$ such that the sequence $\big\{U(n,1_H)c_k\big\}_{n\in N;\,k=1,2,\dots, K}$ is a frame for $\Ac_a$ and for each $x\in \Ac_a$ the sampling formula
\begin{equation}
\label{sformula2}
x=\sum_{k=1}^K\sum_{n\in N} \Lc_k x(n)\,U(n,1_H)c_k \quad \text{ in $\Hc$}
\end{equation}
holds.
\item There exists a frame $\big\{C_{k,n}\big\}_{n\in N;\,k=1,2,\dots, K}$ for $\Ac_a$ such that for each $x\in \Ac_a$ the expansion
\[
x=\sum_{k=1}^K\sum_{n\in N} \Lc_k x(n)\,C_{k,n} \quad \text{ in $\Hc$}
\]
holds.
\end{enumerate}
\end{teo}
\begin{proof}
$(1)$ implies $(2)$. The $L\times K$ Moore-Penrose pseudo-inverse $\mathbf{H}^\dag (\gamma)$ of $\mathbf{H}(\gamma)$ is given by $\mathbf{H}^\dag (\gamma)=\big[\mathbf{H}^*(\gamma)\,\mathbf{H}(\gamma)\big]^{-1}\,\mathbf{H}^*(\gamma)$. Its entries are essentially bounded in $\widehat{N}$ since the entries of 
$\mathbf{H}(\gamma)$ belong to $L^\infty(\widehat{N})$ and $\det^{-1}\big[\mathbf{H}^* (\gamma)\, \mathbf{H}(\gamma)\big]$ is essentially bounded $\widehat{N}$ since $0<A_{\mathbf{H}}$. Besides, $\mathbf{H}^\dag (\gamma)\mathbf{H}(\gamma)=\mathbf{I}_{L}$\, a.e. $\gamma\in \widehat{N}$. The inverse $N$-Fourier transform in $L^2(\widehat{N})$ of the $k$-th column of $\mathbf{H}^\dag (\gamma)$ gives $\mathsf{g}_k$, $k=1,2,\dots,K$. 

\medskip

\noindent $(2)$ implies $(3)$. According to Theorems \ref{besselframe} and \ref{translates} the sequences $\big\{T_n \widetilde{\mathsf{h}}_k\big\}_{n\in N;\,k=1,2,\dots K}$ and $\big\{T_n\mathsf{g}_k\big\}_{n\in N;\,k=1,2,\dots K}$ form a pair of dual frames for $\ell^2(G)$. 
We deduce the expansion sampling expansion as in \eqref{sformula}. Besides, the sequence $\big\{U(n,1_H)c_{k,\mathsf{g}}\big\}_{n\in N;\,k=1,2,\dots, K}$ is a frame for $\Ac_a$.

\medskip

\noindent Obviously, $(3)$ implies $(4)$. Finally, $(4)$ implies $(1)$. Applying $\Tc_{U,a}^{-1}$ we get that the sequences $\big\{T_n \widetilde{\mathsf{h}}_k\big\}_{n\in N;\,k=1,2,\dots K}$ and $\{ \Tc_{U,a}^{-1}(C_{k,n})\}_{n\in N;\,k=1,2,\dots, K}$ form a pair of dual frames for $\ell^2(G)$; in particular, by using Theorem \ref{besselframe} we obtain that $0<A_{\mathbf{H}}$.
\end{proof}

All the possible solutions of $\mathbf{G}(\gamma)\mathbf{H}(\gamma)=\mathbf{I}_{L}$ a.e. $\gamma\in \widehat{N}$ with entries in $L^\infty(\widehat{N})$ are given in terms of the Moore-Penrose pseudo inverse by the $L\times K$ matrices $\mathbf{G}(\gamma):=\mathbf{H}^\dag(\gamma)+\mathbf{U}(\gamma)\big[\mathbf{I}_K-\mathbf{H}(\gamma)\mathbf{H}^\dag(\gamma)\big]$,
where $\mathbf{U}(\gamma)$ denotes any $L\times K$ matrix with entries in $L^\infty(\widehat{N})$.

Notice that $K\geq L$ where $L$ is the order of the group $H$. In case $K=L$, we obtain:
\begin{cor}
In the case $K=L$, assume that its polyphase matrix $\mathbf{H}(\gamma)$ given in \eqref{Hpol} has all entries in $L^\infty(\widehat{N})$. The following statements are equivalents:
\begin{enumerate}
\item The constant $A_{\mathbf{H}}=\displaystyle{\einf_{\gamma \in \widehat{N}} \lambda_{\min}\big[\mathbf{H}^*(\gamma)\mathbf{H}(\gamma)\big]>0}$.
\item There exist $L$ unique elements $c_k$, $k=1,2,\dots, L$, in $\Ac_a$ such that the associated sequence
$\big\{U(n,1_H)c_{k}\big\}_{n\in N;\,k=1,2,\dots, L}$ is a Riesz basis for $\Ac_a$ and the sampling formula
\[
x=\sum_{k=1}^L\sum_{n\in N} \Lc_k x(n)\,U(n,1_H)c_{k}\ \quad \text{ in $\Hc$}
\]
holds for each $x\in \Ac_a$.
\end{enumerate}
Moreover, the interpolation property $\Lc_k c_{k'}(n)=\delta_{k,k'}\delta_{n,0_N}$, where $n\in N$ and $k,k'=1,2,\dots,L$, holds.
\end{cor}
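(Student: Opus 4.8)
The plan is to deduce this corollary from Theorem~\ref{tsampling} by specializing to the square case $K=L$, and then to establish the three features that are genuinely new here: the uniqueness of the $c_k$, the fact that one obtains a Riesz basis (not merely a frame), and the interpolation property.

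First I would observe that when $K=L$ the polyphase matrix $\mathbf{H}(\gamma)$ is square of order $L$, so condition $(1)$, namely $A_{\mathbf{H}}=\einf_{\gamma\in\widehat{N}}\lambda_{\min}[\mathbf{H}^*(\gamma)\mathbf{H}(\gamma)]>0$, is equivalent to $\mathbf{H}(\gamma)$ being invertible for a.e. $\gamma\in\widehat{N}$ with $\det^{-1}[\mathbf{H}^*(\gamma)\mathbf{H}(\gamma)]$ essentially bounded; since in addition all entries of $\mathbf{H}(\gamma)$ lie in $L^\infty(\widehat{N})$, Cramer's rule shows that all entries of $\mathbf{H}(\gamma)^{-1}$ lie in $L^\infty(\widehat{N})$ as well. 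In this situation the Moore--Penrose pseudo-inverse $\mathbf{H}^\dag(\gamma)$ equals $\mathbf{H}(\gamma)^{-1}$, so $\mathbf{H}(\gamma)\mathbf{H}^\dag(\gamma)=\mathbf{I}_K$, and the general description $\mathbf{G}(\gamma)=\mathbf{H}^\dag(\gamma)+\mathbf{U}(\gamma)\big[\mathbf{I}_K-\mathbf{H}(\gamma)\mathbf{H}^\dag(\gamma)\big]$ of all $L^\infty$ solutions of $\mathbf{G}(\gamma)\mathbf{H}(\gamma)=\mathbf{I}_L$ collapses to the single matrix $\mathbf{G}(\gamma)=\mathbf{H}(\gamma)^{-1}$. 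Taking inverse $N$-Fourier transforms of the columns of this matrix yields unique synthesis filters $\mathsf{g}_k\in\ell^2(G)$, hence unique elements $c_k:=\Tc_{U,a}\mathsf{g}_k$ in $\Ac_a$, which settles the uniqueness assertion.

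Next I would invoke Theorem~\ref{translates}$(c)$: the sequences $\{T_n\widetilde{\mathsf{h}}_k\}_{n\in N;\,k=1,\dots,K}$ and $\{T_n\mathsf{g}_k\}_{n\in N;\,k=1,\dots,K}$ are Bessel for $\ell^2(G)$ by Corollary~\ref{bessel} (the entries of $\mathbf{H}(\gamma)$ and of $\mathbf{G}(\gamma)=\mathbf{H}(\gamma)^{-1}$ being in $L^\infty(\widehat{N})$), and since $K=L$ with $\mathbf{G}(\gamma)=\mathbf{H}(\gamma)^{-1}$ they are in fact dual Riesz bases for $\ell^2(G)$. Applying the isomorphism $\Tc_{U,a}$ together with the shifting property \eqref{shifting} transports $\{T_n\mathsf{g}_k\}$ onto $\{U(n,1_H)c_k\}_{n\in N;\,k=1,\dots,L}$, which is therefore a Riesz basis for $\Ac_a$, while the computation leading to \eqref{sperfect}--\eqref{sformula} gives the stated sampling formula; this proves $(1)\Rightarrow(2)$. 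For the reverse implication, if such $c_k$ with the Riesz-basis sampling expansion exist, then $\Tc_{U,a}^{-1}$ carries $\{U(n,1_H)c_k\}$ back to a Riesz basis $\{T_n\mathsf{g}_k\}$ of $\ell^2(G)$ which, by the expansion, is dual to $\{T_n\widetilde{\mathsf{h}}_k\}$; Theorem~\ref{besselframe} then forces $A_{\mathbf{H}}>0$, closing the equivalence.

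Finally, for the interpolation property I would use that dual Riesz bases are biorthogonal, so that $\langle T_n\mathsf{g}_k,T_m\widetilde{\mathsf{h}}_{k'}\rangle_{\ell^2(G)}=\delta_{k,k'}\,\delta_{n,m}$; combining this with the identity $\Lc_k x(n)=\langle\boldsymbol{\alpha},T_n\widetilde{\mathsf{h}}_k\rangle_{\ell^2(G)}$ valid for $x=\Tc_{U,a}\boldsymbol{\alpha}$, and taking $x=c_{k'}=\Tc_{U,a}\mathsf{g}_{k'}$, one obtains $\Lc_k c_{k'}(n)=\langle\mathsf{g}_{k'},T_n\widetilde{\mathsf{h}}_k\rangle_{\ell^2(G)}=\delta_{k,k'}\,\delta_{n,0_N}$. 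I expect the only slightly delicate point to be the passage from $A_{\mathbf{H}}>0$ to the essential boundedness of the entries of $\mathbf{H}(\gamma)^{-1}$ and the consequent uniqueness of $\mathbf{G}(\gamma)$; everything else is a routine assembly of Theorems~\ref{besselframe}, \ref{translates} and \ref{tsampling}.
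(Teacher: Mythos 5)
Your proposal is correct and follows essentially the same route as the paper: in the square case $K=L$ the polyphase matrix $\mathbf{H}(\gamma)$ is invertible with essentially bounded inverse, so the unique synthesis matrix is $\mathbf{G}(\gamma)=\mathbf{H}(\gamma)^{-1}$ and Theorem~\ref{translates}$(c)$ (together with $\Tc_{U,a}$ and Theorems~\ref{besselframe}, \ref{tsampling}) gives the equivalence. The only cosmetic difference is that you obtain the interpolation property from the biorthogonality of the dual Riesz bases, whereas the paper invokes the uniqueness of coefficients with respect to a Riesz basis; these are equivalent observations.
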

\begin{proof}
In this case, the square matrix $\mathbf{H}(\gamma)$ is invertible and the result comes out from Theorem \ref{translates}. From the uniqueness of the coefficients in a Riesz basis we get the interpolation property.
\end{proof}
Denote $H=\{h_1,h_2,\dots,h_L\}$; for a fixed $b\in \Hc$,  we consider the samples 
\[
\Lc_k x(m):=\big\langle x, U(m, h_k)b\big\rangle\,,\quad \text{ $m\in N$ and $k=1,2,\dots, L$}\,, 
\]
of any $x\in \Ac_a$. Since $U(m, h_k)b=U(m,1_H)U(0_N,h_k)b=U(m,1_H)b_k$, where $b_k:=U(0_N,h_k)b$, $k=1,2,\dots, L$, we are in a particular case of \eqref{samples} with $K=L$.
\subsection{An example involving crystallographic groups}
The Euclidean motion group $E(d)$ is the semi-direct product $\RR^d \rtimes_{\phi} O(d)$ corresponding to the homomorphism $\phi : O(d) \rightarrow Aut(\RR^d)$ given by $\phi_{A}(x) = Ax$, where $A\in O(d)$ and $x\in \RR^d$. The composition law on $E(d) = \RR^d \rtimes_{\phi} O(d)$ reads $(x, A) \cdot  (x', A') = (x + Ax', AA')$.

Let $M$ be a non-singular $d\times d$ matrix and $\Gamma$ a finite subgroup of $O(d)$ of order $L$ such that  $A(M\ZZ^d)=M\ZZ^d$ for each $A\in \Gamma$. We consider the {\em crystallographic group} $\Cc_{M,\Gamma}:=M\ZZ^d \rtimes_\phi \Gamma$ and its quasi regular representation (see Ref.~\cite{barbieri:15}) on $L^2(\RR^d)$
\[
U(n,A)f(t)=f[A^{\top}(t-n)]\,,\quad \text{$n\in M\ZZ^d$, $A\in \Gamma$ and $f\in L^2(\RR^d)$}\,.
\]
For a fixed $\varphi \in L^2(\RR^d)$ such that the sequence $\big\{ U(n,A)\varphi \big\}_{(n,A)\in \Cc_{M,\Gamma}}$ is a Riesz sequence for $L^2(\RR^d)$ we consider the $U$-invariant subspace in $L^2(\RR^d)$
\[
\begin{split}
\Ac_\varphi&=\Big\{\sum_{(n,A)\in \Cc_{M,\Gamma}} \alpha(n,A)\, \varphi [A^{\top}(t-n)] \,\,:\,\, \{\alpha(n,A)\}\in \ell^2(\Cc_{M,\Gamma})\Big\}\\
&=\Big\{\sum_{(n,A)\in \Cc_{M,\Gamma}} \alpha(n,A)\, \varphi (At-n) \,\,:\,\, \{\alpha(n,A)\}\in \ell^2(\Cc_{M,\Gamma})\Big\}\,.
\end{split}
\]
Choosing $K$ functions $b_k\in  L^2(\RR^d)$, $k=1,2,\dots,K$, we consider the average samples of $f\in \Ac_\varphi$
\[
\Lc_k f(n)=\langle f, U(n, I) b_k\rangle=\langle f, b_k(\cdot-n)\rangle\,,\quad n\in M\ZZ^d\,.
\]
Under the hypotheses in Theorem \ref{tsampling}, there exist $K\geq L$ sampling functions $\psi_k \in \Ac_\varphi$ for $k=1,2,\dots,K$, such that the sequence $\{\psi_k(\cdot-n)\}_{n\in M\ZZ^d;\,k=1,2,\dots,K}$ is a frame for $\Ac_\varphi$, and the sampling expansion
\begin{equation}
\label{scristal}
f(t)=\sum_{k=1}^K \sum_{n\in M\ZZ^d} \big\langle f, b_k(\cdot-n)\big\rangle_{L^2(\RR^d)} \, \psi_k(t-n) \quad \text{in $L^2(\RR^d)$}
\end{equation}
holds. 

If the generator $\varphi \in C(\RR^d)$ and the function $t\mapsto \sum_n|\varphi(t-n)|^2$ is bounded on $\RR^d$, a standard argument shows that $\Ac_\varphi$ is a reproducing kernel Hilbert space (RKHS) of continuous functions in $L^2(\RR^d)$. As a consequence, convergence in $L^2(\RR^d)$-norm implies pointwise convergence which is absolute and uniform on 
$\RR^d$.

\medskip

Notice that the infinite dihedral group $D_\infty=\ZZ\rtimes_\phi \ZZ_2$ is a particular crystallographic group with lattice 
$\ZZ$ and $\Gamma=\ZZ_2$. Its quasi regular representation on $L^2(\RR)$ reads
\[
U(n,0)f(t)=f(t-n)\quad \text{and}\quad U(n,1)f(t)=f(-t+n)\,,\quad n\in \ZZ \text{ and } f\in L^2(\RR)\,.
\]
So we could obtain  sampling formulas as \eqref{scristal} for $K\geq 2$ average functions $b_k$.

\medskip

The quasi regular unitary representation of a crystallographic group $\Cc_{M,\Gamma}$ on $L^2(\RR^d)$ motivates  the next section:
\subsection{The case of pointwise samples}
Let $\{U(n,h)\}_{(n,h)\in G}$ be a unitary representation of the group $G=N\rtimes_\phi H$ on the Hilbert space $\Hc=L^2(\RR^d)$. If the generator $\varphi\in L^2(\RR^d)$ satisfies that, for each $(n,h)\in G$, the function $U(n,h)\varphi$ is continuous on $\RR^d$, and the condition
\[
\sup_{t\in \RR^d} \sum_{(n,h)\in G} \big|[U(n,h)\varphi](t)\big|^2<\infty\,,
\]
then the subspace $\Ac_\varphi$ is a RKHS of continuous functions in $L^2(\RR^d)$; proceeding as in \cite{zhou:99} one can prove that the above conditions are also necessary.

For $K$ fixed points $t_k\in \RR^d$, $k=1,2,\dots,K$, we consider for each $f\in \Ac_\varphi$ the new samples given by
\begin{equation}
\label{samples2}
\Lc_kf(n):=\big[U(-n,1_H)f\big](t_k)\,,\quad n\in N\, \text{ and }\, k=1,2,\cdots,K\,.
\end{equation}
For each $f=\sum_{(m,h)\in G} \alpha(m,h)\,U(m,h)\,\varphi$ in $\Ac_\varphi$ and $k=1,2,\dots,K$ we have
\[
\begin{split}
\Lc_kf(n)&=\Big[\sum_{(m,h)\in G} \alpha(m,h)\,U[(-n,1_H)\cdot(m,h)]\,\varphi\Big](t_k)\\
&=\sum_{(m,h)\in G} \alpha(m,h)\big[U(m-n,h)\varphi\big](t_k)=\big\langle \boldsymbol{\alpha}, T_n\mathsf{h}_k\big\rangle_{\ell^2(G)}\,,\quad n\in N\,,
\end{split}
\]
where $\boldsymbol{\alpha}=\{\alpha(m,h)\}_{(m,h)\in G}$ and $\mathsf{h}_k(m,h):=\overline{\big[U(m,h)\varphi\big](t_k)}$, $(m,h)\in G$. Notice that $\mathsf{h}_k$ belongs to $\ell^2(G)$, $k=1,2,\cdots,K$. As a consequence, under the hypotheses in Theorem \ref{tsampling} (on these new $\mathsf{h}_k\in \ell^2(G)$, $k=1,2,\dots,K$) a sampling formula as \eqref{sformula2} holds for the  data sequence $\big\{ \Lc_k f(n)\big\}_{n\in N;\,k=1,2,\dots, K}$ defined in \eqref{samples2}.

\medskip

In the particular case of the quasi regular representation of a crystallographic group $\Cc_{M,\Gamma}=M\ZZ^d\rtimes_\phi \Gamma$, for each $f\in \Ac_\varphi$ the samples \eqref{samples2} read
\[
\Lc_kf(n)=\big[U(-n,I)f\big](t_k)=f(t_k+n)\,, \quad n\in M\ZZ^d \,\text{ and }\, k=1,2,\dots, K\,.
\]
Thus (under hypotheses in Theorem \ref{tsampling}), there exist $K$ functions $\psi_k\in \Ac_\varphi$, $k=1,2,\dots, K$, such that for each $f\in \Ac_\varphi$ the sampling formula
\[
f(t)=\sum_{k=1}^K \sum_{n\in M\ZZ^d} f(t_k+n)\,\psi_k(t-n)\,,\quad t\in \RR^d
\]
holds. The convergence of the series in the $L^2(\RR^d)$-norm sense implies pointwise convergence which is absolute and uniform on $\RR^d$.

\medskip

\noindent{\bf Acknowledgments:}
This work has been supported by the grant MTM2017-84098-P from the Spanish {\em Ministerio de Econom\'{\i}a y Competitividad (MINECO)}.


\end{document}